\documentclass[a4paper]{amsart}
\usepackage{amssymb,amscd}
\usepackage[cp1251]{inputenc}
\usepackage[T2A]{fontenc}
\usepackage{graphicx}
\usepackage[all]{xy}

\input xypic

%\unitlength=1mm

\frenchspacing

\emergencystretch=5pt
\tolerance=600

\newtheorem{theorem}{Theorem}[section]

\theoremstyle{definition}
\newtheorem{definition}[theorem]{Definition}
\newtheorem{example}[theorem]{Example}

\theoremstyle{remark}
\newtheorem*{remark}{Remark}

\numberwithin{equation}{section}
%\numberwithin{figure}{chapter}
%\numberwithin{section}{chapter}

\renewcommand{\phi}{\varphi}
\newcommand{\bl}{\mbox{$\lambda\kern-0.53em\lambda$}}% bold lambda
\newcommand{\bmu}{\mbox{$\mu\kern-0.55em\mu$}}% bold mu
\newcommand{\bnu}{\mbox{$\nu\kern-0.51em\nu$}}% bold nu
\def\bphi{\mbox{$\varphi\kern-0.59em\varphi$}}% bold phi

%Blackboard bold letters

\def\R{\mathbb R}

\def\Z{\mathbb Z}

%Calligraphic letter
\def\sK{\mathcal K}

%Bold letters

%Bold italic letters
\newcommand{\mb}[1]{{\textbf {\textit#1}}}

%Smaller capitals in subscripts

%Smaller fractions

%Russian binomial coefficients
%\renewcommand{\binom}[2]{{C_{#1}^{#2}}}

%Russian inequality signs
\renewcommand{\ge}{\geqslant}
\renewcommand{\le}{\leqslant}

%Russian quotation marks
%\def\лк{\symbol{"BE}}
%\def\пк{\symbol{"BF}}

%Longer right arrows

%General math macros
\newcommand{\obg}[1]{g_#1^{n_#1}}

%Algebra macros

\newcommand{\Ker}{\mathop{\rm Ker}}
\newcommand{\rank}{\mathop{\mathrm{rank}}}

%Combinatorial macros
%\newcommand{\bc}{\mathop{\rm bc}}
%\newcommand{\bs}{\mathop{\rm bs}}
%\newcommand{\cc}{\mathop{\rm cc}}
%\def\cone{\mathop{\mathrm{cone}}}
%\newcommand{\cub}{\mathop{\rm cub}}
%\newcommand{\core}{\mathop{\rm core}}
%\newcommand{\lk}{\mathop{\rm lk}\nolimits}
%\newcommand{\ord}{\mathop{\rm ord}}
%\newcommand{\st}{\mathop{\rm st}\nolimits}

%Topology macros

%Toric macros

%\def\G{G}
%\def\K{L}

\def\raag{\mbox{\it RA\/}}

\newcommand{\lk}{\mathcal L_{\mathcal K}}

\def\pt{\mathit{pt}}

\begin{document}

\title{On the commutator subgroup of a right-angled Artin group}

\author{Taras Panov}
\address{Department of Mathematics and Mechanics, Moscow
State University, Leninskie Gory, 119991 Moscow, Russia,
\newline\indent Institute for Theoretical and Experimental Physics,
Moscow, Russia \quad \emph{and}
\newline\indent Institute for Information Transmission Problems,
Russian Academy of Sciences} \email{tpanov@mech.math.msu.su}
\urladdr{http://higeom.math.msu.su/people/taras/}

\author{Yakov Veryovkin}
\address{%Department of Mathematics and Mechanics, Moscow
%State University, Leninskie Gory, 119991 Moscow, Russia,
%\newline\indent 
Steklov Mathematical Institute, Russian Academy of Sciences}
\email{verevkin\_j.a@mail.ru}

\thanks{The research of both authors was carried out at the Steklov Institute of Mathematics and
supported by the Russian Science Foundation grant no.~14-11-00414. The second author is a laureate of the Young Mathematics of Russia 2016 award.}

\subjclass[2010]{20F65, 20F12, 57M07}

\keywords{Right-angled Artin group, commutator subgroup, polyhedral product}

\begin{abstract}
We use polyhedral product models to analyse the structure of the commutator subgroup of a right-angled Artin group. In particular, we provide a minimal set of generators for the commutator subgroup, consisting of special iterated commutators of canonical generators.
\end{abstract}

\maketitle

\section{Introduction}
A \emph{right-angled Artin group}, also known as a \emph{graph group} or a \emph{partially commutative group}, has $m$ generators $g_1,\ldots,g_m$ and commutativity relations $g_ig_j=g_jg_i$ for some pairs~$\{i,j\}$. Such a group interpolates between a free group of rank $m$ (in which no pair of generators commutes), and a free abelian group of rank $m$ (in which every pair of generators commutes). A right-angled Artin group is therefore defined by a graph $\Gamma$ with $m$ vertices, where two vertices are joined by an edge whenever the two corresponding generators commute. Right-angled Artin groups are classical objects in geometric group theory.

Right-angled Artin groups are particular cases of \emph{graph
product groups}, corresponding to a sequence of $m$ groups $\mb
G=(G_1,\ldots,G_m)$ and a graph~$\Gamma$ on $m$ vertices.
The graph product group $\mb G^\Gamma$ consists of
words with letters from $G_1,\ldots,G_m$ in which the elements of
$G_i$ and $G_j$ with $i\ne j$ commute whenever $\{i,j\}$ is an
edge of~$\Gamma$. The graph product group $\mb G^\Gamma$
sits between the free product $G_1\star\cdots\star G_m$
(corresponding to a graph consisting of $m$ disjoint vertices) and
the cartesian product $G_1\times\cdots\times G_m$ (corresponding
to a complete graph). A right-angled Artin group is a graph product in which all $G_i$ are infinite cyclic groups~$\Z$.

To each graph $\Gamma$ without loops and double edges one can assign its \emph{clique complex} $\sK$, which is the simplicial complex on the vertex set of $\Gamma$ whose simplices are cliques (complete subgraphs) of~$\Gamma$. The simplicial complex $\sK$ is \emph{flag}, and its one-skeleton is the graph~$\Gamma$. Many properties of graph products are formulated more explicitly in terms of the simplicial complex~$\sK$. We use the notation $\mb G^\sK$ for a graph product, alongside with~$\mb G^\Gamma$.

A graph product group $\mb G^\sK$  can be defined as the colimit of a diagram of groups over the face category of~$\sK$~\cite{pa-ve16}. There is a similar colimit construction of topological spaces, known as the \emph{polyhedral product}, which is fundamental in toric topology. The polyhedral product functor assigns a space $(\mb X,\mb A)^\sK$ to a sequence of $m$ pairs of spaces $(\mb X,\mb A)=\{(X_1,A_1),\ldots,(X_m,A_m)\}$ and a simplicial complex $\sK$ on $m$ vertices~\cite{bu-pa00,b-b-c-g10,bu-pa15}. From the homotopy-theoretical point of view, polyhedral products $(\mb X,\mb A)^\sK$ have much richer structure than graph product groups $\mb G^\sK$, as the former may be used to study all sorts of higher homotopy phenomena. However, when the underlying simplicial comlex $\sK$ is flag, the classifying space functor $B$ takes the graph product $(\mb G)^\sK$ to the polyhedral product~$(B\mb G)^\sK$, and the polyhedral product $(E\mb G,\mb G)^\sK$ is the classifying space for the commutator subgroup~$(\mb G^\sK){\vphantom{\bigr)}}'$ when all $G_i$ are abelian.
In the case of a right-angled Artin group, each classifying space $BG_i=B\Z$ is a circle, and we obtain as $(B\mb G)^\sK$ the subcomplex $(S^1)^\sK$ in the $m$-torus, introduced by Kim and Roush in~\cite{ki-ro80}.

In this paper we use the construction of polyhedral products and other techniques of toric topology to
study the commutator subgroups of right-angled Artin groups and more general graph products.
Apart from a purely algebraic interest, our motivation lies in the fact that these commutators subgroups arise as the fundamental groups of some remarkable aspherical spaces. We refer to~\cite{pa-ve16,b-e-m-p-p17} for the discussion of topological aspects of the theory of right-angled Artin groups and their commutator subgroups.

In Section~\ref{secprelim} we include some basic information about right-angled Artin groups and polyhedral products. In Section~\ref{secbasisfree} we construct a special basis for the commutator subgroup of a free group, which is also a right-angled Artin group corresponding to $m$ disjoint vertices.
Several other constructions of bases in the commutator subgroup of a free group are known, see e.\,g.~\cite[Appendix~1]{bo-ku94} and~\cite{hu-wa96}.
In Section~\ref{secbasisart} we give an explicit minimal set of generators for the commutator subgroup of an arbitrary right-angled Artin group~$\raag_\sK$. The generators are nested iterated commutators of powers of the canonical generators of~$\raag_\sK$ appearing in a special order prescribed by the combinatorics of~$\sK$. In Section~\ref{secgp} we generalise the result to arbitrary graph products.

We thank the anonymous referee for the most helpful comments and suggestions.

\section{Preliminaries}\label{secprelim}
Let $\sK$ be an abstract simplicial complex on the set $[m] = \{1,2, \dots, m \}$. We refer to a subset $I=\{i_1,\ldots,i_k\}\in\mathcal K $ as a \emph{simplex} (or \emph{face}) of~$\sK$. We assume that $\sK$ contains the empty set $\varnothing$ and all one-element subsets $\{i\}\in[m]$; the latter are referred to as the \emph{vertices} of~$\sK$. 
%Every abstract simplicial complex $\sK$ has a \emph{geometric realisation} $|\sK|$, which is a %polyhedron in a Euclidean space.

We denote by $F_m$ or $F(g_1, \ldots, g_m)$ a free group of rank $m$ with generators $g_1, \ldots, g_m$.

\begin{definition}
The \emph{right-angled Artin group} $\raag_\sK$ corresponding to~$\sK$ is defined by generators and relations as follows:
\[
  \raag_\sK = F(g_1,\ldots,g_m)\big/ (g_ig_j=g_jg_i\text{ for
  }\{i,j\}\in\sK).
\]
\end{definition}

Clearly, the group $\raag_\sK$ depends only on the $1$-skeleton of~$\sK$, the graph~$\sK^1$.

\begin{definition}\label{polpr}
Let
\[
  (\mb X,\mb A)=\{(X_1,A_1),\ldots,(X_m,A_m)\}
\]
be a sequence of $m$ pairs of pointed topological spaces, $\pt\in
A_i\subset X_i$, where $\pt$ denotes the basepoint. For each
subset $I\subset[m]$ we set
\begin{equation}\label{XAI}
  (\mb X,\mb A)^I=\bigl\{(x_1,\ldots,x_m)\in
  \prod_{k=1}^m X_k\colon\; x_k\in A_k\quad\text{for }k\notin I\bigl\}
\end{equation}
For a simplicial complex $\sK$ on $[m]$, the \emph{polyhedral product} of $(\mb X,\mb A)$
corresponding to $\sK$ is
\[
  (\mb X,\mb A)^{\sK}=\bigcup_{I\in\mathcal K}(\mb X,\mb A)^I=
  \bigcup_{I\in\mathcal K}
  \Bigl(\prod_{i\in I}X_i\times\prod_{i\notin I}A_i\Bigl),
\]
where the union is taken inside the Cartesian product $\prod_{k=1}^m X_k$.

In the case when all pairs $(X_i,A_i)$ are the same, i.\,e.
$X_i=X$ and $A_i=A$ for $i=1,\ldots,m$, we use the notation
$(X,A)^\sK$ for $(\mb X,\mb A)^\sK$. Also, if each $A_i=\pt$, then
we use the abbreviated notation $\mb X^\sK$ for $(\mb X,\pt)^\sK$,
and $X^\sK$ for $(X,\pt)^\sK$.
\end{definition}

\begin{example}\label{ppexa}\

\textbf{1.} Let $(X,A)=(S^1,\pt)$, where $S^1$ is a circle. The polyhedral product
\[
  (S^1)^\sK=\bigcup_{I\in\sK}(S^1)^I
\]
is a subcomplex of the $m$-torus~$(S^1)^m$, sitting between the $m$-fold wedge $(S^1)^{\vee m}$ and the $m$-fold Cartesian product~$(S^1)^m$.

\smallskip

\textbf{2.} Let $(X,A)=(\R,\Z)$, where $\Z$ is the set of integer points on the real line~$\R$. We denote the corresponding polyhedral product by~$\lk$:
\begin{equation}\label{lk}
  \lk=(\R,\Z)^\sK=\bigcup_{I\in\sK}(\R,\Z)^I\subset \R^m.
\end{equation}
When $\sK$ consists of $m$ disjoint points, $\lk$ is a grid in
$m$-dimensional space $\R^m$ consisting of all lines parallel to
one of the coordinate axis and passing though integer points. When
$\sK=\partial\varDelta^{m-1}$, the complex $\lk$ is the union of
all integer hyperplanes parallel to the coordinate hyperplanes.
\end{example}

A \emph{missing face} (or a \emph{minimal non-face}) of $\sK$ is a
subset $I\subset[m]$ such that $I$ is not a simplex of~$\sK$, but
every proper subset of $I$ is a simplex of~$\sK$. A simplicial
complex $\sK$ is called a \emph{flag complex} if each of its
missing faces consists of two vertices. Equivalently, $\sK$ is
flag if any set of vertices of $\sK$ which are pairwise connected
by edges is a simplex.

A \emph{clique} (or a \emph{complete subgraph}) of a graph
$\Gamma$ is a subset $I$ of vertices such that every two vertices
in $I$ are connected by an edge. Each flag complex $\sK$ is the
\emph{clique complex} of its one-skeleton $\Gamma=\sK^1$, that is,
the simplicial complex formed by filling in each clique of
$\Gamma$ by a face.

A path-connected space $X$ is \emph{aspherical} if
$\pi_i(X)=0$ for $i\ge2$. An aspherical space $X$ is an
Eilenberg--MacLane space $K(\pi,1)$ with $\pi=\pi_1(X)$, and also the \emph{classifying space} $B\pi$
for the group~$\pi$.

By the result of Kim and Roush~\cite[Theorem~10]{ki-ro80}, the polyhedral product  $(S^1)^\sK$ corresponding 
to a flag complex~$\sK$ is the classifying space for the right-angled Artin group~$\raag_\sK$.
The polyhedral product $\lk$ is the classifying space for the commutator subgroup~$\raag'_\sK$.
More precisely, we have the following result.

\begin{theorem}[see~{\cite[Corollary~3.3]{pa-ve16}}]\label{artfund}
Let $\sK$ be a simplicial complex on $m$ vertices, and let
$\raag_\sK$ be the corresponding right-angled Artin group.
\begin{itemize}
\item[(a)] $\pi_1((S^1)^\sK)\cong\raag_\sK$.
\item[(b)] $\pi_1(\lk)\cong\raag'_\sK$.
\item[(c)] Each of the spaces $(S^1)^\sK$ and $\lk$ is aspherical if and only if 
$\sK$ is a flag complex.
\end{itemize}
\end{theorem}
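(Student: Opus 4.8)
The plan is to settle (a) by a direct computation with the natural cell structure, to deduce (b) from (a) by covering-space theory, and to prove (c) by first reducing the asphericity of $\lk$ to that of $(S^1)^\sK$ and then treating the flag and non-flag cases separately. For (a) I would put on $(S^1)^m$ the product CW structure in which $S^1$ has one $0$-cell (the basepoint) and one $1$-cell; then $(S^1)^\sK$ is the subcomplex having one cell of dimension $|I|$, with closure $(S^1)^I$, for each $I\in\sK$. Its $1$-skeleton is $\bigvee_{i=1}^m S^1$, so $\pi_1=F(g_1,\ldots,g_m)$; the $2$-cells correspond to the edges $\{i,j\}\in\sK$ and are attached along the commutators $[g_i,g_j]$; cells of dimension $\ge 3$ leave $\pi_1$ unchanged. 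Hence $\pi_1((S^1)^\sK)\cong\raag_\sK$.

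For (b), observe that the preimage of $(S^1)^\sK\subset(S^1)^m$ under the universal covering $p\colon\R^m\to(S^1)^m$ is exactly $\bigcup_{I\in\sK}(\R,\Z)^I=\lk$, so $p$ restricts to a covering $\lk\to(S^1)^\sK$; it is regular, with deck group $\Z^m$ acting by coordinatewise translation, and connected because $\Z^m$ acts transitively on the fibre $\Z^m$ over the basepoint. Consequently $\pi_1(\lk)$ is identified with the kernel of the homomorphism $\raag_\sK=\pi_1((S^1)^\sK)\to\pi_1((S^1)^m)=\Z^m$ induced by the inclusion, which sends each $g_i$ to the $i$-th standard generator; this is the abelianisation map of $\raag_\sK$, and its kernel is $\raag'_\sK$.

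For (c), since $\lk\to(S^1)^\sK$ is a covering the two spaces share a universal cover, so $\lk$ is aspherical iff $(S^1)^\sK$ is; it thus suffices to prove that $(S^1)^\sK$ is aspherical exactly when $\sK$ is flag. When $\sK$ is flag this is the Kim--Roush theorem recalled before the statement (alternatively one may induct on the number of vertices, splitting $\sK$ as the union of the closed star of a vertex and a full subcomplex glued along the link, both again flag, and invoking Whitehead's theorem on aspherical gluings). When $\sK$ is not flag, pick a missing face $I$ with $|I|=k\ge 3$; by minimality of $I$ the full subcomplex $\sK_I=\{J\in\sK:J\subseteq I\}$ equals the boundary $\partial\varDelta^{k-1}$ of a $(k-1)$-simplex. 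The coordinate projection onto the factors indexed by $I$ retracts $(S^1)^\sK$ onto $(S^1)^{\sK_I}$ (one checks it lands there using $I'\cap I\in\sK$ for every $I'\in\sK$), so $\pi_{k-1}((S^1)^{\partial\varDelta^{k-1}})$ is a direct summand of $\pi_{k-1}((S^1)^\sK)$. Finally, the universal cover of $(S^1)^{\partial\varDelta^{k-1}}$ is the restriction of $\R^k\to(S^1)^k$ over it, with total space the union of all integer coordinate hyperplanes in $\R^k$ (cf.\ Example~\ref{ppexa}), which is a deformation retract of the space $Z$ obtained from $\R^k$ by deleting the centres of the unit cubes. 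Now $Z$ is simply connected for $k\ge 3$, and the composite $S^{k-1}\hookrightarrow Z\to S^{k-1}$ of the inclusion of a small sphere about one deleted centre with the radial retraction from that centre has degree one, so $\pi_{k-1}(Z)\ne 0$; hence $\pi_{k-1}((S^1)^{\partial\varDelta^{k-1}})\ne 0$, and therefore $\pi_{k-1}((S^1)^\sK)\ne 0$ with $k-1\ge 2$, so $(S^1)^\sK$ is not aspherical.

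I expect the implication ``$\sK$ flag $\Rightarrow(S^1)^\sK$ aspherical'' to be the main obstacle: unlike the other steps it is not formal, and a genuinely self-contained argument requires either the CAT(0) cube-complex structure of the universal cover or the inductive graph-of-aspherical-spaces argument indicated above, so I would simply invoke the cited Kim--Roush result. The secondary point needing care is the geometry of the non-flag case --- identifying the universal cover of $(S^1)^{\partial\varDelta^{k-1}}$ and checking that $Z$ deformation retracts onto the union of integer hyperplanes --- which is routine but must be carried out by hand.
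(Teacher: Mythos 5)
Your argument is essentially correct, but note that the paper does not prove this statement at all: Theorem~\ref{artfund} is quoted from~\cite[Corollary~3.3]{pa-ve16}, so there is no internal proof to match. What you supply is a genuine self-contained proof of (a), of (b), and of the ``only if'' direction of (c), while for the ``if'' direction (flag implies aspherical) you, like the paper, ultimately fall back on the cited Kim--Roush result; your sketch via Whitehead's asphericity theorem for the union of the star of a vertex and the deleted full subcomplex along the link is indeed the standard way to make that direction self-contained. Your treatment of (a) via the product cell structure, of (b) via restricting $\R^m\to(S^1)^m$ to $\lk=p^{-1}((S^1)^\sK)$, and of the non-flag case via retracting onto $(S^1)^{\sK_I}=(S^1)^{\partial\varDelta^{k-1}}$ for a missing face $I$ with $|I|=k\ge3$ and exhibiting a nontrivial class in $\pi_{k-1}$ of its cover (the union of integer hyperplanes, homotopy equivalent to $\R^k$ minus the cube centres) are all correct and are the arguments one would find in the literature; in particular the degree-one composite $S^{k-1}\hookrightarrow Z\to S^{k-1}$ correctly certifies $\pi_{k-1}\ne 0$.

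One small repair: in (b) you justify the connectedness of $\lk$ by saying the deck group $\Z^m$ acts transitively on the fibre over the basepoint. Transitivity of the deck action on a fibre does not imply connectedness of the total space (a disjoint union of coverings can have this property), and connectedness is what you need to identify $\pi_1(\lk)$ with the kernel of $\raag_\sK\to\Z^m$ rather than with the subgroup attached to one component. The correct, easy argument is that $\sK$ contains every vertex $\{i\}$, so all coordinate lines through lattice points lie in $\lk$; hence every point of $\lk$ can be joined to the lattice $\Z^m$ and any two lattice points can be joined inside $\lk$, so $\lk$ is path-connected. With that fixed, the kernel is the abelianisation kernel $\raag'_\sK$ exactly as you say.
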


Given a subset~$J\subset[m]$, consider the restriction of $\sK$
to~$J$:
\[
  \sK_J=\{I\in\sK\colon I\subset J\},
\]
which is also known as a \emph{full subcomplex} of~$\sK$.

Recall that the \emph{smash product} of two pointed space $X$ and $Y$ is defined as $X\wedge Y=X\times
Y/(X\times\pt\cup\pt\times Y)$, that is, the quotient of the product $X\times Y$ by 
the wedge $X\vee Y$. Given a sequence of spaces $\mb A=(A_1,\ldots,A_m)$ and a subset
$J=\{j_1,\ldots,j_k\}\subset[m]$, we denote the smash product 
$A_{j_1}\wedge\cdots\wedge A_{j_k}$ by $\mb A^{\wedge J}$.

A theorem of Bahri, Bendersky, Cohen and Gitler~\cite[Theorem~2.21]{b-b-c-g10} describes the homotopy type of the suspension of a polyhedral product $(\mb X,\mb A)^\sK$ in the case when all $X_i$ are contractible. It implies the following result.

\begin{theorem}[{\cite{b-b-c-g10}}]\label{wedgethmX}
Let $\sK$ be a simplicial complex on~$[m]$, and let 
$(\mb X,\mb A)$ be a sequence of pairs of pointed cell complexes, in which all 
$X_i$ are contractible. Then there is the following isomorphism of integer homology groups:
\[
  H_p\bigl((\mb X,\mb A)^\sK\bigr)\cong
  \widetilde H_{p-1}\Bigl(\bigvee_{J\subset[m]}|\sK_J|\wedge\mb A^{\wedge J}\Bigr).
\]
\end{theorem}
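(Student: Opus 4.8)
The plan is to obtain the statement by passing to reduced singular homology in the Bahri--Bendersky--Cohen--Gitler stable decomposition recalled just above. In the contractible case \cite[Theorem~2.21]{b-b-c-g10} provides a pointed homotopy equivalence of the form
\[
  \Sigma(\mb X,\mb A)^\sK\;\simeq\;\bigvee_{\varnothing\ne J\subseteq[m]}\Sigma\bigl(|\sK_J|*\mb A^{\wedge J}\bigr),
\]
where $*$ denotes the join; equivalently, since $X*Y\simeq\Sigma(X\wedge Y)$ for well-pointed $X$ and $Y$, the $J$-th wedge summand may be rewritten as the double suspension $\Sigma^{2}\bigl(|\sK_J|\wedge\mb A^{\wedge J}\bigr)$. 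The one genuinely geometric ingredient I would be borrowing from the cited theorem is the identification, for cone pairs, of the polyhedral smash product over the full subcomplex $\sK_J$ (formed as in Definition~\ref{polpr} with smash products in place of Cartesian products) with the join $|\sK_J|*\mb A^{\wedge J}$; this uses that a contractible $X_i$ together with a cofibration $A_i\hookrightarrow X_i$ makes $(X_i,A_i)$ homotopy equivalent to the cone pair on $A_i$, after which the smash polyhedral product of cone pairs over $\sK_J$ visibly deformation retracts onto its join coordinates.

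Granting the displayed equivalence, the rest is bookkeeping. Reduced homology carries the wedge to the direct sum over the nonempty $J\subseteq[m]$, and unwinding the suspensions with the suspension isomorphism $\widetilde H_n(\Sigma Z)\cong\widetilde H_{n-1}(Z)$ (together with $\widetilde H_n(X*Y)\cong\widetilde H_{n-1}(X\wedge Y)$) produces a net shift of degree by one:
\[
  \widetilde H_p\bigl((\mb X,\mb A)^\sK\bigr)\;\cong\;\bigoplus_{\varnothing\ne J\subseteq[m]}\widetilde H_{p-1}\bigl(|\sK_J|\wedge\mb A^{\wedge J}\bigr).
\]
It remains to match this with the right-hand side of the theorem. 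For $p\ge1$ we have $\widetilde H_p=H_p$ on the left and the $J=\varnothing$ term contributes nothing on the right, so both sides agree with the display. For $p=0$ the space $(\mb X,\mb A)^\sK$ is path-connected (each $X_i$ is), so $H_0=\Z$ whereas $\widetilde H_0=0$; on the theorem's right-hand side the $J=\varnothing$ summand carries, by the usual convention, reduced homology $\Z$ concentrated in degree $-1$, which supplies exactly the missing $\Z$. Thus the isomorphism of the theorem holds in every degree.

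With \cite[Theorem~2.21]{b-b-c-g10} taken as a black box there is essentially no obstacle: the only points demanding care are the suspension shifts and the degree-zero convention. The substantive content, which this argument imports rather than reproves, is the homotopy splitting of $\Sigma(\mb X,\mb A)^\sK$ itself. Were one to insist on a self-contained proof, the main work would be either to run the BBCG-style induction splitting one vertex off $(\mb X,\mb A)^\sK$ at a time via the cofibration $A_i\hookrightarrow X_i$, or to carry out a cellular chain-level argument in the spirit of Hochster's computation --- fix CW structures on the $X_i$ extending those on the $A_i$, give $(\mb X,\mb A)^\sK\subset\prod_i X_i$ the induced subcomplex structure, and split its reduced cellular chain complex as the direct sum over $J\subseteq[m]$ of the augmented simplicial chains of $\sK_J$, shifted and tensored with $\widetilde C_*(\mb A^{\wedge J})$ --- the crux there being to verify compatibility of the splitting with the cellular differential.
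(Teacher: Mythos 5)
Your derivation is correct and coincides with what the paper itself does: the paper offers no independent proof, but simply quotes the stable splitting of \cite[Theorem~2.21]{b-b-c-g10} and reads off the homology isomorphism exactly as you do, via the suspension isomorphism and the identification of each join summand $|\sK_J|*\mb A^{\wedge J}$ with $\Sigma\bigl(|\sK_J|\wedge\mb A^{\wedge J}\bigr)$. Your extra care with the $J=\varnothing$ summand and the degree-zero convention is a reasonable reading of the unreduced--reduced mismatch and does not change the substance.
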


\section{A basis for the commutator subgroup of a free group $F_m$}\label{secbasisfree}
When $\sK$ consists of $m$ disjoint points, the right-angled Artin group $\raag_\sK$ is a free group $F_m = F(g_1, \ldots, g_m)$. Here we construct a special basis for the commutator subgroup~$F'_m$.

Let $(g,h)=g^{-1}h^{-1}gh$ denote the group commutator of two elements $g,h$.

\begin{theorem}\label{basisraagfree}
The commutator subgroup $F'_m$ of a free group $F_m = F(g_1, \ldots, g_m)$ is  a free group
freely generated by nested iterated commutators of the form
\begin{equation}\label{itercomm}
  (\obg{j}, \obg{i}),
  (\obg{{k_1}},(\obg{j},\obg{i})),\ldots,(\obg{{k_1}},(\obg{{k_2}},
  \ldots,(\obg{{k_{m-2}}},(\obg{j},\obg{i}))\ldots)),
\end{equation}
where $n_k \in \mathbb{Z}\setminus\{0\}$, $k_1 < k_2 < \cdots < k_{m-2} < j > i$ and $k_s \neq i$ for any $s$.
\end{theorem}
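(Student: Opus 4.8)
Since $\sK$ is $m$ disjoint points we have $\raag_\sK=F_m$, and by Theorem~\ref{artfund} the polyhedral product $\lk=(\R,\Z)^\sK$ is a graph with $\pi_1(\lk)\cong F'_m$; in particular $F'_m$ is free (equivalently, by Nielsen--Schreier). So the content of the statement is that the set $W$ of nested commutators in~\eqref{itercomm} is a free \emph{basis}. I would establish two facts: (a)~$W$ generates $F'_m$, and (b)~the images of $W$ form a $\Z$-basis of $(F'_m)^{\mathrm{ab}}=H_1(\lk)$. Together these force $W$ to be a free basis, by the following elementary observation, which handles the fact that free groups of infinite rank need not be Hopfian: if $G$ is free, $S$ generates $G$, and $S$ maps to a $\Z$-basis of $G^{\mathrm{ab}}$, then $S$ is a free basis of $G$ --- a nontrivial relation among $S$ would already be one among some finite subset $S_0\subseteq S$, whose images lie in a basis of $G^{\mathrm{ab}}$ and are hence $\Z$-independent, so $\langle S_0\rangle$ is free of rank exactly $|S_0|$, and the tautological epimorphism of free groups of equal finite rank onto it is an isomorphism.

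For (a) I would show $\langle W\rangle\trianglelefteq F_m$. Granting this, $F_m/\langle W\rangle$ is abelian, since $(g_j,g_i)\in W$ for $j>i$ and $(g_i,g_j)=(g_j,g_i)^{-1}$; hence $F'_m\subseteq\langle W\rangle$, and the reverse inclusion is clear because $W$ consists of commutators. Normality reduces to checking $g_\ell^{\pm1}w\,g_\ell^{\mp1}\in\langle W\rangle$ for each generator $w\in W$ and each $\ell$; since $g_\ell^{\pm1}w\,g_\ell^{\mp1}=w\,(w,g_\ell^{\mp1})$, this comes down to $(w,g_\ell^{\mp1})\in\langle W\rangle$, proved by induction on the number of nested factors of $w$ using the commutator identities $(xy,z)=(x,z)^y(y,z)$ and $(x,yz)=(x,z)(x,y)^z$ together with the Hall--Witt identity: one expands $(w,g_\ell^{\mp1})$ into a product of nested commutators, sorts the outer powers into increasing order, and uses that powers of one generator commute to eliminate repeated indices, ending up inside $\langle W\rangle$. (The same can be extracted from Reidemeister--Schreier rewriting of $F'_m\le F_m$ with the Schreier transversal $\{g_1^{n_1}\cdots g_m^{n_m}\}$, i.e.\ the lexicographic spanning tree of the Cayley graph of~$\Z^m$, which is $\lk$: the nontrivial Schreier generators are conjugates of commutators $\bigl((g_{k+1}^{n_{k+1}}\cdots g_m^{n_m})^{-1},g_k^{-1}\bigr)$, which are then rewritten into $W$-form by the same identities.)

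For (b) I would compute $H_1(\lk)$ via Theorem~\ref{wedgethmX} applied to $(\R,\Z)$. Because $\sK$ is $m$ disjoint points, $|\sK_J|$ is a discrete set of $|J|$ points and each $|\sK_J|\wedge\mb A^{\wedge J}$ is discrete, the terms with $|J|\le1$ vanish, and
\[
  H_1(\lk)\;\cong\;\bigoplus_{\substack{J\subseteq[m]\\ |J|\ge 2}}\widetilde H_0\bigl(|\sK_J|\bigr)\otimes\widetilde H_0\bigl(\Z^{\wedge|J|}\bigr),
\]
a free abelian group with an evident basis indexed by a subset $J\subseteq[m]$ with $|J|\ge2$, one of the $|J|-1$ non-maximal vertices of $J$, and a tuple in $(\Z\setminus\{0\})^{|J|}$. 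This matches $W$ bijectively: the set of indices occurring in a commutator of~\eqref{itercomm} is such a $J$; the constraints $k_1<\cdots<k_{|J|-2}<j$ and $j>i$ force $j=\max J$, so that $i$ is an arbitrary non-maximal vertex of $J$ (and $k_s\ne i$ becomes automatic); and the exponents $(n_k)$ give the tuple. Finally, matching the splitting of Theorem~\ref{wedgethmX} with cellular chains on the graph $\lk$ --- the class of $(\obg{k_1},\dots,(\obg j,\obg i))$ is an iterated difference of coordinate translates of the rectangle class of $(\obg j,\obg i)$ --- identifies the image of each element of $W$ with the corresponding basis vector, up to sign and lower-order terms. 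Hence $\overline W$ is a $\Z$-basis of $H_1(\lk)$; for $m=2$ this is the classical statement that the cycle space of the planar integer grid is freely generated, via iterated finite differences, by the rectangle loops $(g_2^b,g_1^a)$. Combined with (a) and the observation in the first paragraph, this proves the theorem.

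The step I expect to be the real obstacle is the bookkeeping in~(a): showing that the collection/Hall--Witt rewriting terminates, deposits the nested commutators precisely in the normalized order of~\eqref{itercomm} ($k_1<k_2<\cdots<j>i$ with $k_s\ne i$), and never produces anything outside $W$. In effect those combinatorial conditions are exactly the normal form for this rewriting; the chain-level identification in~(b) is a longer but essentially routine verification.
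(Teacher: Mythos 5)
Your plan is correct in outline but follows a genuinely different route in its second half, so let me compare. For generation, you and the paper are really doing the same thing: the paper contracts an explicit maximal tree in the grid $E_m=(\R,\Z)^{\sK}$ and rewrites the loop attached to each non-tree edge into the commutators~\eqref{itercomm} by induction on $m$, using the swap identity~\eqref{swap} (with details as in \cite[Lemma~4.7]{pa-ve16}); your Reidemeister--Schreier remark with the lexicographic transversal is the algebraic shadow of exactly that argument, and the identity you need for the ``sorting/eliminating repeated indices'' step in the normal-closure version is precisely~\eqref{swap}, so the bookkeeping you flag as the obstacle is available. The real divergence is freeness. The paper never identifies a basis of $H_1$: it exhausts $E_m$ by cubes $I^m(s)$, observes that $F'_m$ is the inductive limit of the finite-rank free groups $\pi_1(E_m^{(s)})$, and checks the purely numerical identity $J_m^{(s)}=W_m^{(s)}$ between the count~\eqref{Jm} of in-range commutators and the rank~\eqref{Wm} computed from a maximal tree; Hopficity of finite-rank free groups then finishes, which is the same principle as your (correct and nicely stated) reduction lemma. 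Your route instead computes $H_1(\lk)$ by Theorem~\ref{wedgethmX} and asserts that the images of~$W$ form a $\Z$-basis. Note that once (a) is done, spanning of $H_1$ is automatic, so the substance of your~(b) is $\Z$-independence, and this is more than ``routine'' as stated: expanding the iterated differences $(T_{k_1}^{n_{k_1}}-1)\cdots$ applied to a rectangle class produces \emph{translated} rectangle classes, which are not themselves images of elements of~$W$, so ``up to lower-order terms'' must be made precise in a concrete basis of the cycle space of the grid (e.g.\ non-tree edges of a spanning tree) with a well-founded order and unit diagonal; carrying that out is roughly the same amount of work as the paper's polynomial count, which only ever needs cardinalities thanks to the finite exhaustion. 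In short: your approach works and trades the paper's counting for a conceptual abelianization criterion, but the two deferred verifications (the collection in (a), where the paper's~\eqref{swap} is the missing tool, and the triangularity in (b), which the paper's finite-cube count deliberately avoids) are exactly where the content lies.
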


\begin{proof}
Consider the covering of the wedge of circles $B = (S^1)^{\vee m}$ corresponding to the commutator subgroup of $\pi_1(B)=F_m$. The total space of this covering can be described as $E = E_m=
\bigcup_{i=1}^m L_i \subset \R^m$, were $L_i$ is the union of all lines in~$\R^m$ which are parallel to the $i$th coordinate axis and pass through integer points. We have $E_m=(\R,\Z)^\sK$, where $\sK$ is a simplicial complex consisting of $m$ disjoint vertices, see~\eqref{lk}. The covering map $E\to B$ takes all points from the integer lattice $\Z^m\subset E$ to the basepoint of the wedge, and takes all segments of the $i$th coordinate direction in the graph $E$ to the $i$th circle of the wedge~$B$. The induced homomorphism $\pi_1(E)
\rightarrow \pi_1(B)$ is identified with the inclusion of the commutator subgroup $F_m'
\hookrightarrow F_m$.

\smallskip

\emph{Case $m = 2$.} Let $F_2 = F(g_1, g_2)$. We need to check that the elements
$(\obg{2}, \obg{1})$ with $n_k \in \mathbb{Z}\setminus\{0\}$ form a basis of~$F'_2$.
A word in $F_2$ belongs to $F_2'$ if and only if the sum of the exponents of all $g_1$ is zero and the sum of the exponents of all $g_2$ is zero. Consider a reduced word $\phi$ from~$F_2'$. We need to write $\phi$ as a product of commutators $(\obg{2}, \obg{1})$. To do this, we shall split off commutators of this form one by one from the left. Denote by $|\phi|$ the number of alterations of $g_1$ and $g_2$ in the word~$\phi$. Note that if $|\phi| \le 2$, then $\phi = 1$, as $\phi \in F_2'$. We induct on the number~$|\phi|$. Assume that $\phi = g_1^i g_2^j g_1 \psi$ (the case $\phi = g_2^i g_1^j g_2 \psi$ is considered similarly). We rewrite the word as follows: $\phi = g_1^i g_2^j g_1^{-i} g_2^{-j} g_2^j g_1^i g_1\psi = (g_2^{-j}, g_1^{-i}){\vphantom{A}}^{-1}g_2^j g_1^{i+1}
\psi$. Now $\varphi$ is the product of a commutator of the required form and the word $\phi_1 = g_2^j g_1^{i+1} \psi$, where $|\phi_1| <|\phi|$. Therefore, after finitely many iterations we arrive at a word
$\phi_k$ with $|\phi_k| \le 2$, i.\,e. $\phi_k = 1$.

\smallskip

\emph{Case $m = 3$.} Let $F_3 = F(g_1, g_2, g_3)$. We need to check that the elements
\begin{equation}\label{itercomm3}
  (\obg{3},\obg{1}),\quad(\obg{3},\obg{2}),\quad(\obg{2},\obg{1}),
  \quad(\obg{1},(\obg{3},\obg{2})),\quad(\obg{2},(\obg{3},\obg{1}))
\end{equation}
with $n_k \in \mathbb{Z}\setminus\{0\}$ form a basis of~$F'_3$.

Let $(y_1, y_2, y_3)$ be Cartesian coordinates in $\mathbb{R}^3$. We contract the maximal tree in $E$ given by the union of all lines of the form
\[
  \{y_1 = C_1,\, y_2 = C_2\}, \quad \{y_2 = C_2,\, y_3 = 0\}, \quad
  \{y_1 = y_3 = 0\},
\]
where $C_1,C_2 \in \mathbb{Z}$ are arbitrary integer constants. Denote by~$\widehat{E}$ the space obtained as the result of contraction. Then $\widehat{E}$ is a wedge of circles, and the circles in this wedge correspond bijectively to those edges of the graph $E$ that do not belong to the contracted maximal tree. These edges of $E$ are segments of the form
\[
  [(c_1, c_2,
  c_3), \; (c_1 + 1, c_2, c_3)] \text{ \quad or \quad } [(c_1, c_2,
  c_3), \; (c_1, c_2 + 1, c_3)],
\]
where $c_i \in \mathbb{Z}$  (we also have $c_3\ne0$ in the former case, but this does not affect the subsequent argument). Every such segment can be completed to a loop in $E$ by adding edges from  the contracted maximal tree, and we need to express the word corresponding to this loop  via the commutators~\eqref{itercomm3}.

We first consider a segment $[(c_1, c_2, c_3), \; (c_1 + 1, c_2,c_3)]$. 
%Если $c_i = 0$ для некоторого $i$, то отрезок лежит в
%одной из координатных гиперплоскостей и задача сводится к случаю
%$m = 2$. 
%Пусть все $c_i \neq 0$. 
It corresponds to a circle in the wedge $\widehat{E}$ and to a loop in~$E$, all of whose segments except
$[(c_1, c_2, c_3), \;{(c_1 + 1, c_2,c_3)}]$ lying in the contracted maximal tree. This loop is given by the word $g_2^{c_2} g_1^{c_1} g_3^{c_3} g_1 g_3^{-c_3}g_1^{-c_1-1} g_2^{-c_2}$, and can be expressed via the commutators~\eqref{itercomm3} as follows:
%$$
%((g_2^{c_2}, (g_3^{c_3}, g_1^{c_1})) (g_3^{c_3}, g_1^{c_1}))^{-1} (g_2^{c_2}, (g_3^{c_3}, g_1^{c_1 + 1})) (g_3^{c_3}, g_1^{c_1+1}) =
%$$
\begin{multline*}
g_2^{c_2} g_1^{c_1} g_3^{c_3} g_1 g_3^{-c_3} g_1^{-c_1-1}
g_2^{-c_2} \\
= (g_2^{-c_2},(g_3^{-c_3},g_1^{-c_1})) \; (g_3^{-c_3},g_1^{-c_1})^{-1} \; 
(g_3^{-c_3}, g_1^{-c_1-1}) \; (g_2^{-c_2}, (g_3^{-c_3}, g_1^{-c_1 -1}))^{-1}.
\end{multline*}

Now consider a segment $[(c_1, c_2, c_3), \; (c_1, c_2 + 1,c_3)]$. 
%Если $c_i = 0$ для некоторого $i$, то отрезок лежит в одной из координатных гиперплоскостей и %задача сводится к случаю $m = 2$. Пусть все $c_i \neq 0$. 
It corresponds to the loop in $E$ given by the word 
$g_2^{c_2} g_1^{c_1} g_3^{c_3} g_2 g_3^{-c_3} g_1^{-c_1} g_2^{-c_2 - 1}$,
and can be expressed via the commutators~\eqref{itercomm3} as follows:
\begin{multline*}
g_2^{c_2} g_1^{c_1} g_3^{c_3} g_2 g_3^{-c_3} g_1^{-c_1} g_2^{-c_2-1}
\\= (g_2^{-c_2}, g_1^{-c_1}) \; (g_1^{-c_1},(g_3^{-c_3}, g_2^{-c_2})) \;
(g_3^{-c_3},g_2^{-c_2})^{-1} 
\\ (g_3^{-c_3}, g_2^{-c_2 - 1}) \;
\; (g_1^{-c_1},(g_3^{-c_3}, g_2^{-c_2 - 1}))^{-1} \; (g_2^{-c_2 - 1},g_1^{-c_1})^{-1}.
\end{multline*}

\smallskip

\emph{General case.} We start by describing a maximal tree in $E=E_m$. We construct this tree inductively as a union of lines. For the two-dimensional case, denote the coordinates in $\R^2$ by $(y_1, y_m)$. We take as a maximal tree in $E=E_2$ the union of the lines
$\{y_1 = C_1\}$ with $C_1 \in \mathbb{Z}$ and the line $\{y_m = 0\}$. Now assume that a maximal tree is chosen in $E_{k+1}\subset \R^{k+1}$ and describe how to chose such a tree in
$E_{k+2} \subset \R^{k+2}$. Denote the coordinates in $\R^{k+1}$ by $(y_1, \ldots, y_k, y_m)$, $k < m - 1$. The maximal tree in $E_{k+1}$ constructed on the previous step is a union of $(k+1)$ families of lines $\{Q_1\}, \ldots, \{Q_{k+1}\}$. Then we pass to $\R^{k+2}$ with coordinates $(y_1, \ldots, y_{k+1},y_m)$ and take  as a maximal tree in $E_{k+2}$ the union of the following families of lines:
\begin{multline*}
  \{Q_1;\; y_{k+1} = C_{k+1}\}, \quad
  \{Q_2;\; y_{k+1} = C_{k+1}\}, \ldots,\\
  \{Q_{k+1};\; y_{k+1} = C_{k+1}\}, \quad \{y_1=\cdots=y_k=y_m =
  0\}.
\end{multline*}
Note that we appended each family from the previous step by an additional equation
$y_{k+1} = C_{k+1}$, $C_{k+1} \in \mathbb{Z}$, and added one more family consisting of a single line $\{y_1=\cdots=y_k=y_m = 0\}$. At the end we obtain in $E = E_m$ a maximal tree which a union of $m$ families of lines. After contracting this maximal tree we obtain a wedge of circles $\widehat{E}$. The circles in this wedge correspond bijectively to those edges of the graph $E$ that do not belong to the contracted maximal tree. These edges of $E$ are segments of the form
\begin{equation}\label{Isegment}
  I = [(c_1, \ldots,
  c_m), \; (c_1, \ldots, c_p + 1, \ldots, c_m)], \quad c_i\in\Z,\;p \in \{1, \ldots, m - 1\}.
\end{equation}
where $c_i \in \mathbb{Z}$. Every such segment can be completed to a loop in $E$ by adding edges from  the contracted maximal tree, and we need to express the word corresponding to this loop  via the commutators~\eqref{itercomm}.

First consider a segment~\eqref{Isegment} with $p \neq m-1$. This segment $I$ corresponds to a loop in $E$ consisting of $I$ and segments from the contracted maximal tree, given by the word
$$
\chi = g_{m-1}^{c_{m-1}} g_1^{c_1} g_2^{c_2} \cdots
g_{m-2}^{c_{m-2}} g_m^{c_m} g_p g_m^{-c_m} g_{m-2}^{-c_{m-2}}
\cdots g_p^{-c_p-1} \cdots g_1^{-c_1} g_{m-1}^{-c_{m-1}}.
$$
Let $\widehat{I}$ be the orthogonal projection of $I$ onto the hyperplane $y_{m-1} = 0$. The loop corresponding to $\widehat{I}$ can be expressed via the commutators~\eqref{itercomm} not containing $g_{m-1}$, by the induction assumption. Let this expression be given by a word~$\psi$. Then $\chi =
(g_{m-1}^{-c_{m-1}}, \psi) \cdot \psi^{-1}$. The commutator $(g_{m-1}^{-c_{m-1}}, \psi)$ can also be expressed via the commutators~\eqref{itercomm}. To do this we use the identity
\begin{multline}\label{swap}
  (\obg{q},(\obg{p},x))=(\obg{q},x)(x,(\obg{p},\obg{q}))(\obg{q},\obg{p})(x,\obg{p})(\obg{p},(\obg{q},x))\\
  (x,\obg{q})(\obg{p},\obg{q})(\obg{p},x).
\end{multline}
This identity allows us to swap the element $g_{m-1}^{-c_{m-1}}$ with other elements~$g_i^{-c_i}$ in iterated commutators until $g_{m-1}^{-c_{m-1}}$ reaches the position prescribed by the ordering of indices in~\eqref{itercomm}. This argument is the same as in~\cite[Lemma~4.7]{pa-ve16}, and more details can be found there.

Now consider a segment~\eqref{Isegment} with $p = m-1$, that is,
\[
  I =  [(c_1, \ldots, c_m), \; (c_1, \ldots, c_{m-1} + 1, c_m)].
\]
The corresponding loop in $E$ is given by the word
\begin{multline*}
\chi = g_{m-1}^{c_{m-1}} g_1^{c_1} g_2^{c_2} \ldots
g_{m-2}^{c_{m-2}} g_m^{c_m} g_{m-1} g_m^{-c_m} g_{m-2}^{-c_{m-2}}
\ldots g_1^{-c_1} g_{m-1}^{-c_{m-1}-1} =\\= g_{m-1}^{c_{m-1}} \psi
g_{m-1} \xi g_{m-1}^{-c_{m-1} - 1}=(g_{m-1}^{-c_{m-1}},
\psi)\,\psi^{-1}\, (g_{m-1}^{-c_{m-1} - 1}, \xi)\, \xi^{-1},
\end{multline*}
where $\psi$ and $\xi$ do not contain $g_{m-1}$. The words $\psi$ and $\xi$ can be expressed via the commutators~\eqref{itercomm} not containing $g_{m-1}$, by the induction assumption. The rest of the argument is the same as in the previous paragraph.

\smallskip

We have therefore proved that any element of the commutator subgroup $F'_m$ can be expressed via the iterated commutators~\eqref{itercomm}. It remains to prove that the system of generators~\eqref{itercomm} is free. Let $I^m(s)$ be a cube in $\R^m$  with edges of length $s\in\Z$ and vertices in the lattice $\Z^m$. Let $E_m^{(s)}:=E_m\cap I^m(s)$. Then
$E_m^{(s)}$ is a finite graph, and $G^{(s)}_m=\pi_1(E_m^{(s)})$ is a free group of finite rank. We can position the cubes $I^m(s)$, $s=1,2,\ldots$, in such a way that they include in each other and $\R^m=\bigcup_{s=1}^\infty I^m(s)$. The free group
$F'_m=\pi_1(E_m)$ of infinite rank is the inductive limit of the groups~$G^{(s)}_m$. It is enough to prove that each group~$G^{(s)}_m$ is freely generated by the iterated commutators~\eqref{itercomm} corresponding to loops inside the cube~$I^m(s)$. For simplicity, assume that $I^m(s)$ is the cube in the positive orthant of $\R^m$ with a vertex at the origin. Then a commutator of the form~\eqref{itercomm} corresponds to a loop in~$I^m(s)$ if and only if all exponents of the elements in the commutator satisfy $0<n_k\le s$. The number of commutators~\eqref{itercomm} with exponents in this range is given by
\begin{equation}\label{Jm}
  J_m^{(s)}=\sum_{i=2}^m \binom m i (i-1)s^i.
\end{equation}
Denote by $W_m^{(s)}$ the rank of the free group $G^{(s)}_m$. It is equal to the number of circles in the wedge obtained by contracting a maximal tree in the graph~$E_m^{(s)}$. We need to show that $J_m^{(s)} = W_m^{(s)}$.

By considering a maximal tree in the graph $E_2^{(s)}$ we obtain $W_2^{(s)} = s^2$. Furthermore, we have a recursive relation
\[
W_m^{(s)} = W_{m-1}^{(s)}(s+1) + (s+1)^{m-1} s - s.
\]
It implies that
\[
  W_m^{(s)} = W_{m-l}^{(s)} (s+1)^l + l
  (s+1)^{m-1} s - s \sum_{i=0}^{l-1}(s+1)^i,
\]
for $0 \le l \le m-2$. In particular, for $l = m-2$ we obtain
\begin{equation}\label{Wm}
  W_m^{(s)} = s^2 (s+1)^{m-2} + (m-2) (s+1)^{m-1} s - s
  \sum_{i=0}^{m-3}(s+1)^i.
\end{equation}
Since both $J_m^{(s)}$ and $W_m^{(s)}$ are polynomials in $s$, we only need to compare their coefficients. Calculating the coefficient of $s^k$ in~\eqref{Wm} we obtain
\begin{multline*}
\binom{m-2}{k-2} + (m-2)\binom{m-1}{k-1} - \sum_{i=k-1}^{m-3}
\binom{i}{k-1} \\ = \binom{m-2}{k-2} + (m-2)\binom{m-1}{k-1} -
\binom{m-2}{k} 
\\  = m\binom{m-1}{k-1} - \binom{m}{k} = (k-1)\binom{m}{k}.
\end{multline*}
This is precisely the coefficient of $s^k$ in $J_m^{(s)}$, see~\eqref{Jm}.
\end{proof}

\section{Generators for the commutator subgroup of a right-angled Artin group}\label{secbasisart}

\begin{theorem}\label{raagbasis}
Let $\raag_\sK$ be the right-angled Artin group corresponding to a simplicial complex~$\sK$ on $m$ vertices. The commutator subgroup $\raag'_\sK$ has a minimal generator set consisting of iterated commutators
\begin{equation}\label{itercommu}
  (\obg{j}, \obg{i}),\;
  (\obg{{k_1}},(\obg{j},\obg{i})),\;\ldots,\;
  (\obg{{k_1}},(\obg{{k_2}},\ldots,(\obg{{k_{m-2}}},(\obg{j},\obg{i}))\ldots)),
\end{equation}
where $n_k \in \mathbb{Z}\setminus\{0\}$, $k_1 < k_2 < \cdots < k_{m-2} < j > i$, $k_s \neq i$ for any $s$, and $i$ is the smallest vertex in a connected component  not containing~$j$ of the subcomplex $\sK_{\{k_1,\ldots,k_{\ell-2},j,i\}}$.
\end{theorem}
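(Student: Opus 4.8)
The plan is to realise $\raag'_\sK$ as $\pi_1(\lk)$ via Theorem~\ref{artfund}(b) and to study the graph $\lk=(\R,\Z)^\sK$ exactly as in the proof of Theorem~\ref{basisraagfree}, but now keeping track of the extra identifications forced by the simplices of~$\sK$. First I would choose a maximal tree in $\lk$ compatible with the one built for the full grid $E_m$ in the free case, so that after contracting it we obtain a wedge of circles whose generators are the loops coming from the edges of $\lk$ not in the tree. These edges are again segments of the form~\eqref{Isegment}, and each such loop, when expressed in $\raag_\sK$, gives one of the iterated commutators~\eqref{itercommu} (modulo the commutativity relations of~$\raag_\sK$). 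The point is that once two vertices $a,b$ span an edge of~$\sK$, a commutator $(\obg{a},\obg{b})$ becomes trivial, and more generally any nested commutator in which the innermost pair, or any adjacent pair after the swapping identity~\eqref{swap} has been applied, lies in a common simplex collapses. This is precisely what the condition ``$i$ is the smallest vertex in its connected component of $\sK_{\{k_1,\ldots,k_{\ell-2},j,i\}}$ not containing~$j$'' encodes: it selects, among the free generators~\eqref{itercomm}, exactly those that survive the quotient and are not redundant.

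Concretely I would argue in two halves. For the \emph{spanning} half, I would take an arbitrary element of $\raag'_\sK$, lift it to $F'_m$, write it via the free basis~\eqref{itercomm} using Theorem~\ref{basisraagfree}, and then push it down, showing that every generator~\eqref{itercomm} violating the stated combinatorial condition either maps to the identity in $\raag_\sK$ or can be rewritten, using~\eqref{swap} together with the commutativity relations $g_ig_j=g_jg_i$ for $\{i,j\}\in\sK$, as a product of generators~\eqref{itercommu} satisfying the condition. The swapping identity~\eqref{swap} is the essential tool here: it lets one move a ``too small'' index inward or outward until either it reaches an admissible slot or it meets an index adjacent to it in~$\sK$, at which point the commutator dies. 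For the \emph{independence/minimality} half, I would again exhaust $\lk$ by the finite subgraphs $L_\sK^{(s)}=\lk\cap I^m(s)$, note that $\pi_1(L_\sK^{(s)})$ is a free group whose rank equals the number of independent cycles, and verify that this rank equals the number of commutators~\eqref{itercommu} with exponents in $\{1,\ldots,s\}$. Equivalently I would compute $H_1(\lk)$ by Theorem~\ref{wedgethmX} with $(\mb X,\mb A)=(\R,\Z)$ — so $\mb A^{\wedge J}$ is a wedge of spheres indexed by $\Z^{|J|}$-type data — and match the resulting rank, componentwise over full subcomplexes $\sK_J$, with the enumeration of admissible commutators; a generating set of a free group whose cardinality equals the rank of the abelianisation $H_1$ must be a free basis, which gives minimality.

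The main obstacle I anticipate is the bookkeeping in the spanning step: showing rigorously that the normal-form rewriting using~\eqref{swap} terminates and that the only obstructions to reducing a free generator~\eqref{itercomm} to an admissible one are exactly the simplices of~$\sK$ lying in $\{k_1,\ldots,k_{m-2},j,i\}$, so that the surviving generators are precisely those for which $i$ is minimal in its $j$-free component of that full subcomplex. One has to be careful that a commutator can become trivial not only when its innermost pair is an edge of~$\sK$ but after several swaps; tracking this is what makes the ``connected component'' formulation (rather than a naive ``$\{i,j\}\notin\sK$'') the correct one. I would model this part closely on the argument of~\cite[Lemma~4.7]{pa-ve16} and on the $m=2,3$ computations already displayed, extracting from them the precise collapsing rule and then proving by induction on~$m$ (peeling off the largest index, exactly as in the free case) that it yields the set~\eqref{itercommu}. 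The counting identity needed to close the independence half should be a polynomial-in-$s$ comparison of the same flavour as~\eqref{Jm}--\eqref{Wm}, now with the Euler-characteristic / recursive computation of $\rank\pi_1(L_\sK^{(s)})$ carried out one connected component at a time.
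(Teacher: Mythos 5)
Your spanning half is essentially the paper's argument: start from the free basis of Theorem~\ref{basisraagfree} and use the swap identity~\eqref{swap} together with the relations coming from edges of~$\sK$ to eliminate the commutators~\eqref{itercomm} that do not appear in~\eqref{itercommu}. The paper organises this as a two-step induction on path length (first removing commutators in which $j$ and $i$ lie in the same connected component of $\sK_{\{k_1,\ldots,k_{\ell-2},j,i\}}$, then expressing a commutator whose innermost entry $i_2$ is not minimal through the one with the minimal $i_1$ in the same $j$-free component, modulo shorter commutators); your plan is the same in substance, and the bookkeeping you worry about is exactly what that induction handles.

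The genuine gap is in your minimality half. The complex $\lk$ is not a graph: $\lk^{(s)}=\lk\cap I^m(s)$ is a cubical complex of dimension $\dim\sK+1$, and $G^{(s)}_m=\pi_1(\lk^{(s)})$ is \emph{not} free in general (for the $5$-cycle, $\raag'_\sK$ contains a surface group), so the claim that ``$\pi_1(\lk^{(s)})$ is a free group whose rank equals the number of independent cycles'' fails, and the principle ``a generating set of a free group whose cardinality equals $\rank H_1$ is a free basis'' is both inapplicable and aimed at the wrong conclusion: the theorem asserts a \emph{minimal generating set}, not a basis (a basis exists only when $\sK^1$ is chordal). The correct closing step, which your ``equivalently'' sentence almost reaches, is a pure abelianisation bound: by Theorem~\ref{wedgethmX} applied to $\lk^{(s)}=(I_s,Z_{s+1})^\sK$ one gets that $H_1(\lk^{(s)})$ is free abelian of rank $P_m^{(s)}=\sum_{J\subset[m]}(\mathop{\mathrm{cc}}(\sK_J)-1)s^{|J|}$, which is exactly the number of commutators~\eqref{itercommu} with exponents in $\{1,\ldots,s\}$; hence any generating set of $G^{(s)}_m$ has at least $P_m^{(s)}$ elements, and minimality follows by passing to the inductive limit over~$s$. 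In particular no analogue of the maximal-tree rank count and the polynomial comparison~\eqref{Jm}--\eqref{Wm} from the free case is available (or needed) here.
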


\begin{proof}
In the case when $\sK$ consists of $m$ disjoint points the statement is proved in Lemma~\ref{basisraagfree}. Adding an edge $\{p,q\}$ to the complex $\sK$ results in adding the commutation relation $(g_p,g_q)=1$ to the right-angled Artin group. We shall eliminate commutators from the set~\eqref{itercomm} which do not appear in~\eqref{itercommu} using the new commutation relations. This argument is similar to the corresponding argument in the case of a right-angled Coxeter group, see~\cite[Theorem~4.6]{pa-ve16}, and we only outline the main steps.

First assume that the vertices $j$ and $i$ are in the same connected component of the complex
$\sK_{\{k_1,\ldots,k_{\ell-2},j,i\}}$.  We shall
show that the corresponding commutator $(\obg{{k_1}},(\obg{{k_2}},\ldots,(\obg{{k_{\ell-2}}},(\obg{j},\obg{i}))\ldots))$ can be excluded from the generating set. We choose a path from $i$ to~$j$, that is, choose vertices $i_1,\ldots,i_q$ from $k_1,\ldots,k_{\ell-2}$ with the property that $\sK$ contains the edges $\{i,i_1\}$, $\{i_{1},i_{2}\}$, $\ldots,$ $\{i_{q-1},i_{q}\}$, $\{i_q,j\}$. We proceed by induction on the length of the path.  Induction starts from the commutator
$(\obg{j},\obg{i})=1$ corresponding to a one-edge path $\{i,j\}\in\sK$. Now assume that the path consists of $q+1$ edges.
Using the relation~\eqref{swap} we can move the elements $\obg{{i_1}},\obg{{i_2}},\ldots,
\obg{{i_q}}$ in $(\obg{{k_1}},(\obg{{k_2}},\cdots(\obg{{k_{\ell-2}}},(\obg{j},\obg{i}))\cdots))$ to the right and restrict ourselves to the commutator
$(\obg{{i_1}},(\obg{{i_2}},\cdots(\obg{{i_q}},(\obg{j},\obg{i}))\cdots))$. Using~\eqref{swap} together with the commutation relation $(\obg{{i_q}},\obg{j})=1$ coming from the edge $\{i_q,j\}\in\sK$ we convert the commutator
$(\obg{{i_1}},(\obg{{i_2}},\cdots(\obg{{i_q}},(\obg{j},\obg{i}))\cdots))$
to the commutator
$(\obg{j},(\obg{{i_1}},\cdots(\obg{{i_{q-1}}},(\obg{{i_q}},\obg{i}))\cdots))$ modulo commutators of shorter length. The latter contains the commutator
$(\obg{{i_1}},\cdots(\obg{{i_{q-1}}},(\obg{{i_q}},\obg{i}))\cdots)$ corresponding to a shorter path $\{i,i_1,\ldots,i_q\}$. By the inductive hypothesis it can be expressed through commutators of shorter length and therefore excluded from the set of generators.

We therefore obtain a generator set for $\raag_\sK'$ consisting of nested commutators
$(\obg{{k_1}},\cdots(\obg{{k_{\ell-2}}},(\obg{j},\obg{i}))\cdots)$ with $j$ and $i$ in different connected components of $\sK_{\{k_1,\ldots,k_{\ell-2},j,i\}}$. Consider commutators
$(\obg{{k_1}},\cdots(\obg{{k_{\ell-2}}},(\obg{j},\obg{{i_1}}))\cdots)$
and
$(\obg{{k'_1}},\cdots(\obg{{k'_{\ell-2}}},(\obg{j},\obg{{i_2}}))\cdots)$ with the property
that
$\{k_1,\ldots,k_{\ell-2},j,i_1\}=\{k'_1,\ldots,k'_{\ell-2},j,i_2\}$
and $i_1,i_2$ lie in the same connected component of
$\sK_{\{k_1,\ldots,k_{\ell-2},j,i_1\}}$ which is different from the connected component containing~$j$. We claim that one of these commutators can be expressed through the other and commutators of shorter length. To see this, we argue as in the previous paragraph, i.\,e. we consider a path between $i_1$ and~$i_2$ in $\sK_{\{k_1,\ldots,k_{\ell-2},j,i_1\}}$  and then reduce it inductively to a one-edge path. This leaves us with the pair of commutators
$(\obg{{i_2}},(\obg{j},\obg{{i_1}}))$ and
$(\obg{{i_1}},(\obg{j},\obg{{i_2}}))$, where $\{i_1,i_2\}\in\sK$,
$\{i_1,j\}\notin\sK$, $\{i_2,j\}\notin\sK$. The claim then follows easily from the relation $(\obg{{i_1}},\obg{{i_2}})=1$.

Now, to enumerate independent commutators, we use the convention of writing
$(\obg{{k_1}},\cdots(\obg{{k_{\ell-2}}},(\obg{j},\obg{i}))\cdots)$ where $i$ is the smallest vertex in its connected component within $\sK_{\{k_1,\ldots,k_{\ell-2},j,i\}}$. This leaves us with precisely the set of commutators~\eqref{itercommu}.

It remains to show that the generating set~\eqref{itercommu} is minimal. To do this we use the ``exhausting'' procedure as in the final part of the proof of Theorem~\ref{basisraagfree}. Recall that
$\raag'_\sK=\pi_1(\lk)$, see Theorem~\ref{artfund}. Let $I^m(s)$ be a lattice cube with edges of length~$s$. Set $\lk^{(s)}=\lk\cap I^m(s)$. Then 
$\lk^{(s)}$ is a finite cell complex (a cubic complex) and 
$G^{(s)}_m=\pi_1(\lk^{(s)})$ is a finitely generated group. We can assume $\R^m=\bigcup_{s\in\mathbb N}I^m(s)$, so that $\lk=\bigcup_{s\in\mathbb N}\lk^{(s)}$. The infinitely generated group $\raag'_\sK=\pi_1(\lk)$ is the inductive limit of the groups~$G^{(s)}_m$. It is enough to prove that the commutators~\eqref{itercommu} corresponding to the loops inside~$I^m(s)$ give a minimal generating set for the group~$G^{(s)}_m$. The fact that these commutators generate~$G^{(s)}_m$ follows from the argument above; we only need to check the minimality.

A commutator of the form~\eqref{itercommu} corresponds to a loop in~$I^m(s)$ if and only if all exponents of the elements in the commutator satisfy $0<n_k\le s$. The number of commutators~\eqref{itercommu} with exponents in this range is given by
\begin{equation}\label{Pm}
  P_m^{(s)}=\sum_{J\subset[m]}(\mathop{\mathrm{cc}}(\sK_J)-1)s^{|J|},
\end{equation}
where $\mathop{\mathrm{cc}}(\sK_J)$ is the number of connected components of the complex~$\sK_J$, and $|J|$ denotes the cardinality of the set~$J$. Consider the first integer homology group $H_1(\lk^{(s)})$. Since $G^{(s)}_m=\pi_1(\lk^{(s)})$, we have $H_1(\lk^{(s)})=G^{(s)}_m/(G^{(s)}_m)'$. On the other hand, the space $\lk^{(s)}$ is a polyhedral product of the form $(I_s,Z_{s+1})^\sK$, where $I_s=[0,s]$ is the segment of length~$s$ and
$Z_{s+1} = \{0, \ldots, s\}$ is the set integer point on this segment. Then Theorem~\ref{wedgethmX} implies that
\[
  H_1(\lk^{(s)})\cong
  \widetilde H_0\Bigl(\bigvee_{J\subset[m]}|\sK_J| \wedge Z_{s+1}^{\wedge J}\Bigr)
  \cong \widetilde H_0\Bigl(\bigvee_{J\subset[m]} |\sK_J|^{\vee
  (s^{|J|})}\Bigr).
\]
Comparing this with~\eqref{Pm} we obtain that $H_1(\lk^{(s)})$ is a free abelian group of rank~$P_m^{(s)}$ (because $\widetilde H_0(X\vee Y)=\widetilde H_0(X)\oplus\widetilde H_0(Y)$
and $\rank\widetilde H_0(|\sK_J|)=\mathop\mathrm{cc}(\sK_J)-1$).
Therefore, the number of generators of the group
$G^{(s)}_m=\pi_1(\lk^{(s)})$ is at least $P_m^{(s)}$, as needed.
\end{proof}

According to a result of of Servatius,
Droms and Servatius~\cite{s-d-s89}, the commutator subgroup $\raag'_\sK$ is free if and only if the $1$-skeleton $\sK^1$ is a chordal graph (see also~\cite[Corollary~4.4]{pa-ve16}). A graph without loops and double edges is called \emph{chordal} (or \emph{triangulated})
if each of its cycles with $\ge 4$ vertices has a chord (an edge
joining two vertices that are not adjacent in the cycle). When $\sK^1$ is a chordal graph, Theorem~\ref{raagbasis} gives a basis for the free group~$\raag'_\sK$.

\begin{example}\

\textbf{1.} Let $\sK$ be a $5$-cycle, shown in Figure~\ref{pentagon}, left. 
\begin{figure}[h]
\unitlength=0.8mm
  \begin{center}
  \begin{picture}(110,40)
  \put(8,-3){\small 5}
  \put(-2.5,18){\small 1}
  \put(19,40.5){\small 2}
  \put(41,18){\small 3}
  \put(30,-3){\small 4}
  \put(10,0){\circle*{1}}
  \put(30,0){\circle*{1}}
  \put(40,20){\circle*{1}}
  \put(20,40){\circle*{1}}
  \put(0,20){\circle*{1}}
  \put(10,0){\line(1,0){20}}
  \put(30,0){\line(1,2){10}}
  \put(40,20){\line(-1,1){20}}
  \put(20,40){\line(-1,-1){20}}
  \put(0,20){\line(1,-2){10}}
  \put(78,-3){\small 5}
  \put(67.5,18){\small 1}
  \put(89,40.5){\small 2}
  \put(111,18){\small 3}
  \put(100,-3){\small 4}
  \put(80,0){\circle*{1}}
  \put(100,0){\circle*{1}}
  \put(110,20){\circle*{1}}
  \put(90,40){\circle*{1}}
  \put(70,20){\circle*{1}}
  \put(80,0){\line(1,0){20}}
  \put(100,0){\line(1,2){10}}
  \put(110,20){\line(-1,1){20}}
  \put(90,40){\line(-1,-1){20}}
  \put(70,20){\line(1,-2){10}}
  \put(80,0){\line(1,4){10}}
  \put(100,0){\line(-1,4){10}}
  \end{picture}
  \end{center}
  \caption{A $5$-cycle (left) and a chordal graph (right).}
  \label{pentagon}
\end{figure}
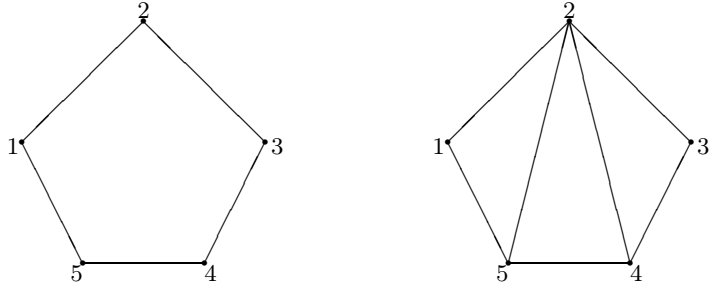
For the corresponding right-angled Artin group $\raag_\sK$, Theorem~\ref{raagbasis} gives a generator set for the commutator subgroup $\raag'_\sK$ consisting of the commutators
\begin{gather*}
  (g_3^{n_3},g_1^{n_1}),\;(g_4^{n_4},g_1^{n_1}),\;(g_4^{n_4},g_2^{n_2}),\;
  (g_5^{n_5},g_2^{n_2}),\;(g_5^{n_5},g_3^{n_3}),\\
  (g_4^{n_4},(g_5^{n_5},g_2^{n_2})),\;(g_3^{n_3},(g_5^{n_5},g_2^{n_2})),\;
  (g_1^{n_1},(g_5^{n_5},g_3^{n_3}),\;(g_3^{n_3},(g_4^{n_4},g_1^{n_1}),\;
  (g_2^{n_2},(g_4^{n_4},g_1^{n_1})
\end{gather*}
with $n_k \in \mathbb{Z}\setminus\{0\}$. The graph $\sK^1$ is not chordal, so the group $\raag'_\sK$
is not free. In fact, $\raag'_\sK$ contains a surface group of genus~$5$, generated by the ten commutators above with all exponents~$n_i=1$, which satisfy a single relation. This surface group is the commutator subgroup for the right-angled Coxeter group corresponding to~$\sK$, see~\cite[Example~4.8]{pa-ve16}.

\textbf{2.} Now let $\sK$ be the $1$-dimensional simplicial complex (graph) shown in~Figure~\ref{pentagon}, right. 
This graph is chordal, so the corresponding group $\raag'_\sK$ is free. Theorem~\ref{raagbasis} gives a basis consisting of the commutators
\begin{gather*}
  (g_3^{n_3},g_1^{n_1}),\;(g_4^{n_4},g_1^{n_1}),\;(g_5^{n_5},g_3^{n_3}),\;
  (g_1^{n_1},(g_5^{n_5},g_3^{n_3}),\;(g_3^{n_3},(g_4^{n_4},g_1^{n_1})
\end{gather*}
with $n_k \in \mathbb{Z}\setminus\{0\}$.
\end{example}

\section{Generalisation to graph products}\label{secgp}

\begin{definition}
Let $\sK$ be a simplicial complex on~$[m]$ and let $\mb
G=(G_1,\ldots,G_m)$ be a sequence of $m$ groups. The graph product $\mb G^{\sK}$ is defined as the quotient group
\[
  \mb G^{\sK}\cong\mathop{\mbox{\Huge$\star$}}_{k=1}^m G_k\big/(g_ig_j=g_jg_i\,\text{ for
}g_i\in G_i,\,g_j\in G_j,\,\{i,j\}\in\sK),
\]
where $\mathop{\mbox{\Huge$\star$}}_{k=1}^m G_k=G_1\star\cdots\star G_m$ denotes the free
product of the groups~$G_k$.
\end{definition}

The graph product $\mb G^{\sK}$ depends only on the $1$-skeleton of~$\sK$.
There is also a categorical definition of the graph product $\mb G^{\sK}$ as a colimit of the groups $\mb G^I=\prod_{i\in I}G_i$ over the category of faces $I\in\sK$, see~\cite[Construction~2.5]{pa-ve16} for the details. When each $G_i$ is $\Z$, the graph product $\mb G^{\sK}$ is the right-angled Artin group~$\raag_{\sK}$.

\begin{remark}
Although graph products can be defined for (well-behaved) topological groups~\cite{p-r-v04}, we only consider discrete groups $G_i$ here.
\end{remark}

There is a canonical epimorphism $\mb G^\sK\to\prod_{k=1}^m G_k$ obtained by letting $g_i\in G_i$ and $g_j\in G_j$ commute for any pair $\{i,j\}$.  When each group $G_k$ is abelian, the group
$\Ker(\mb G^\sK\to\prod_{k=1}^m G_k)$ coincides with the commutator subgroup $(\mb G^\sK){\vphantom{\bigr)}}'$.

\begin{theorem}\label{ragws}
Let $\sK$ be a flag simplicial complex on~$m$ vertices, let $\mb
G=(G_1,\ldots,G_m)$ be a sequence of $m$ nontrivial groups, and
let $\mb G^{\sK}$ be the corresponding graph product group.
\begin{itemize}
\item[(a)] $\Ker(\mb G^\sK\to\prod_{k=1}^m G_k)$ is a free group if and only if $\sK^1$ is a chordal graph;

\item[(b)] $\Ker(\mb G^\sK\to\prod_{k=1}^m G_k)$ has a minimal generator set consisting of iterated commutators
\[
  (g_j, g_i),\;
  (g_{k_1},(g_{j},g_{i})),\;\ldots,\;
  (g_{k_1},(g_{k_2},\ldots,(g_{k_{m-2}},(g_{j},g_{i}))\ldots)),
\]
where $g_k\in G_k\setminus\{1\}$, $k_1 < k_2 < \cdots < k_{m-2} < j > i$, $k_s \neq i$ for any $s$ and $i$ is the smallest vertex in a connected component not containing~$j$ of the subcomplex
$\sK_{\{k_1,\ldots,k_{\ell-2},j,i\}}$.
\end{itemize}
\end{theorem}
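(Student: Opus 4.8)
The plan is to reduce Theorem~\ref{ragws} to the right-angled Artin case treated in Theorems~\ref{basisraagfree} and~\ref{raagbasis}, by exploiting the standard retraction/section structure relating a graph product $\mb G^\sK$ to the right-angled Artin group $\raag_\sK$. First I would fix generating sets $S_k$ for each $G_k$ and observe that the tautological surjection $F(\bigsqcup_k S_k)\to\mathop{\mbox{\Huge$\star$}}_k G_k\to\mb G^\sK$ factors the right-angled Artin group $\raag_\Gamma$ on the vertex set $\bigsqcup_k S_k$ (where two generators are joined by an edge iff they lie in distinct $G_i,G_j$ with $\{i,j\}\in\sK$, or in the same $G_k$) onto $\mb G^\sK$; more usefully, there is a retraction $\mb G^\sK\to G_k$ for each $k$ killing all other factors, and hence the commutator subgroup is detected ``coordinatewise'' in a way parallel to the Artin situation. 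The cleaner route, which I would follow, is the polyhedral product model: when $\sK$ is flag and each $G_k$ is replaced by its classifying space $BG_k$, the classifying space of $\mb G^\sK$ is the polyhedral product $(\underline{BG})^\sK$, and the classifying space of $\Ker(\mb G^\sK\to\prod_k G_k)=(\mb G^\sK)'$ (abelian case) — more generally of the displayed kernel — is the polyhedral product $(E\mb G,\mb G)^\sK=(\underline{EG},\underline{G})^\sK$, exactly as recalled in Section~\ref{secprelim} for $G_k=\Z$. This gives $\pi_1$ of a polyhedral product with contractible first coordinates, so Theorem~\ref{wedgethmX} computes $H_1$.

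The key steps, in order: (1) establish asphericity and the identification $\pi_1\bigl((\underline{EG},\underline{G})^\sK\bigr)\cong\Ker(\mb G^\sK\to\prod_k G_k)$ for flag $\sK$ — this is the graph-product analogue of Theorem~\ref{artfund}(b),(c), proved by the same colimit-of-$K(\pi,1)$ argument as in~\cite{pa-ve16}; (2) prove that the listed iterated commutators generate the kernel. For (2) I would run the identical two-stage argument as in the proof of Theorem~\ref{raagbasis}: start from the free product case ($\sK$ discrete), where the kernel $\Ker(\mathop{\mbox{\Huge$\star$}}_k G_k\to\prod_k G_k)$ is free on the nested commutators $(g_{k_1},(\dots,(g_j,g_i)\dots))$ with $g_k\in G_k\setminus\{1\}$ and $k_1<\cdots<j>i$, $k_s\ne i$ — this is the verbatim generalisation of Theorem~\ref{basisraagfree}, with ``$g_k^{n_k}$, $n_k\ne0$'' replaced throughout by ``$g_k\in G_k\setminus\{1\}$'' (the covering-space/tree-contraction argument is insensitive to this change, since it only uses that the letters from $G_k$ are distinct from the identity and from letters of other factors) — and then add edges $\{p,q\}\in\sK$ one at a time, using the commutation relations $(g_p,g_q)=1$ (for all $g_p\in G_p$, $g_q\in G_q$) together with the swap identity~\eqref{swap} to eliminate, exactly as in Theorem~\ref{raagbasis}, all commutators except those with $i$ minimal in its connected component of $\sK_{\{k_1,\dots,k_{\ell-2},j,i\}}$ not containing $j$. (3) Prove minimality via the exhausting procedure: one cannot literally intersect with a cube $I^m(s)$ since the $G_k$ need not be $\Z$, so instead I would filter by finitely generated subgroups — take $\mb G^{(N)}=(G_1^{(N)},\dots,G_m^{(N)})$ with $G_k^{(N)}$ an increasing exhaustion of $G_k$ by finitely generated subgroups (or even, reduce to free $G_k$ first by naturality and then exhaust as before) — and compute $H_1$ of the corresponding polyhedral product by Theorem~\ref{wedgethmX}: $H_1\cong\widetilde H_0\bigl(\bigvee_{J\subset[m]}|\sK_J|\wedge \mb G^{\wedge J}\bigr)$, whose rank counts exactly the abelianised contribution of the listed commutators. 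Then part (a) follows from the structure of the generating set together with the Droms--Servatius chordality criterion~\cite{s-d-s89} applied to the Artin quotient, or directly: the kernel is free iff no cycle of length $\ge4$ without a chord occurs in $\sK^1$, which one reads off from which nested commutators survive step (2).

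The main obstacle I anticipate is step (3), the minimality/rank count in the non-finitely-generated, non-abelian setting: the slick cube-intersection argument of Theorems~\ref{basisraagfree}--\ref{raagbasis} used crucially that $G_k=\Z$ and that loops in $\lk^{(s)}$ correspond to bounded exponents. In the general graph-product case one must either (i) pass first, by functoriality of the whole construction in the sequence $\mb G$, to the case where each $G_k$ is free — any group is a quotient of a free group, and a generating set that is minimal after pulling back to the free cover is minimal — thereby reducing to a right-angled Artin group on possibly infinitely many generators and then re-running the finite-rank exhaustion; or (ii) argue abstractly that $H_1$ of the classifying space is free abelian with basis indexed precisely by the listed commutators, using the wedge decomposition of Theorem~\ref{wedgethmX} to identify $\widetilde H_0(|\sK_J|\wedge\mb G^{\wedge J})$ as the free abelian group on (reduced tuples from $\prod_{j\in J}(G_j\setminus\{1\})$) times ($\mathrm{cc}(\sK_J)-1$), matching the count of commutators $(g_{k_1},(\dots,(g_j,g_i)\dots))$ with support $J=\{k_1,\dots,k_{\ell-2},j,i\}$ and $i$ minimal in its component. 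Route (ii) is cleaner and is what I would write up; the bookkeeping identifying the $\widetilde H_0$-basis with the commutator set is the only genuinely new computation, and it is a direct transcription of the end of the proof of Theorem~\ref{raagbasis} with $s^{|J|}$ replaced by the number of reduced $|J|$-tuples of nontrivial elements.
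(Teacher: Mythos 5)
For part (b) your plan is essentially the paper's own proof: reduce to the argument of Theorem~\ref{raagbasis} (free/discrete case plus edge-by-edge elimination via the identity~\eqref{swap}, with $g_k^{n_k}$ replaced by arbitrary nontrivial $g_k\in G_k$), identify the classifying space of $\Ker(\mb G^\sK\to\prod_k G_k)$ with $(E\mb G,\mb G)^\sK$ for flag $\sK$ via \cite{staf15} or \cite[Theorem~3.2]{pa-ve16}, and get minimality by computing $H_1$ through Theorem~\ref{wedgethmX}. Your route (ii) for the minimality count is a mild and sensible variation: the paper keeps the exhaustion scheme after substituting $(E\mb G,\mb G)^\sK$ for $\lk$, while you correctly note that the cube filtration does not literally transfer when $G_k\neq\Z$ and instead compare the generating set with a basis of $\widetilde H_0\bigl(\bigvee_J|\sK_J|\wedge\mb G^{\wedge J}\bigr)$ for the full space; the bookkeeping you describe ($s^{|J|}$ replaced by the number of tuples of nontrivial elements supported on $J$) is exactly the right transcription.

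Part (a), however, is a genuine gap as you propose to handle it. There is no ``Artin quotient'' of a general graph product: a homomorphism $\mb G^\sK\to\raag_\sK$ would need nontrivial homomorphisms $G_k\to\Z$, which fail to exist for, say, finite $G_k$; and even with the surjection $\raag_\Gamma\to\mb G^\sK$ you set up at the start, freeness of a kernel neither passes to quotients nor pulls back, so the Servatius--Droms--Servatius criterion~\cite{s-d-s89} for RAAGs does not transfer this way. Your fallback --- ``reading off'' freeness from which nested commutators survive --- is not an argument at all: a generating set carries no information about relations (the $5$-cycle example in Section~\ref{secbasisart} yields a generating set of exactly the same shape, yet the commutator subgroup is not free), and the ``only if'' direction moreover requires exhibiting a non-free (e.g.\ surface) subgroup of the kernel when $\sK^1$ has a chordless cycle of length at least~$4$, for which your sketch provides no mechanism. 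The correct move, and what the paper does, is simply to invoke \cite[Theorem~4.3]{pa-ve16}, where part (a) is proved for graph products.
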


Part (a) is proved in~\cite[Theorem~4.3]{pa-ve16}.  This result may look surprising, as the condition in the criterion depends only on the combinatorics of $\sK$ and does not depend on the group structure of particular $G_k$. In particular, when $\sK$ is $m$ disjoint points, we obtain that
\[
  K_m:=\Ker\bigl(G_1\star\cdots\star G_m\to G_1\times\cdots\times G_m\bigr)
\]
is a free group. The reason is that $\Ker(\mb G^\sK\to\prod_{k=1}^m G_k)$ is the fundamental group of the polyhedral product $(E\mb G,\mb G)^\sK$, and therefore it depends only on the cardinality of $G_i$ (as the groups are discrete), and does not depend on their group structure.

The proof of~(b) follows the same line as for Theorem~\ref{raagbasis}. We give a sketch emphasising the most crucial points; a more detailed proof is included in the PhD thesis of the second author. 

\begin{proof}[Proof of Theorem~\ref{ragws}~(b)]
First, we consider the group $K_m$ corresponding to the case when $\sK$ is $m$ disjoint points. Observe that $K_2=\Ker(G_1\star G_2\to G_1\times G_2)$ is generated by the commutators $(g_2,g_1)$ with $g_i\in G_i\setminus\{1\}$. Indeed, a word $g_{i_1}g_{j_1}g_{i_2}\cdots g_{i_p}g_{j_p}\in G_1\star G_2$ with $g_{i_k}\in G_1$, $g_{j_k}\in G_2$, $k=1,\ldots,m$, belongs to $K_2$ if and only if $g_{i_1}\cdots g_{i_p}=1$ in $G_1$ and $g_{j_1}\cdots g_{j_p}=1$ in~$G_2$. Now we split off commutators from the word in the same way as in the case $m=2$ of Theorem~\ref{basisraagfree}. In the case of $K_m$ we argue by induction as follows. Write an element of $K_m$ as $g_{i_1}h_{j_1}g_{i_2}\cdots g_{i_p}h_{j_p}$ with $g_{i_k}\in G_m$, $h_{j_k}\in G_1\star\cdots\star G_{m-1}$ and $g_{i_1}\cdots g_{i_p}=1$ in $G_m$. Then split off commutators from the left as 
\begin{multline*}
  g_{i_1}h_{j_1}g_{i_2}\cdots g_{i_p}h_{j_p}=(g_{i_1}^{-1},h_{j_1}^{-1})h_{j_1}     
  g_{i_1}g_{i_2}\cdots g_{i_p}h_{j_p}\\=(g_{i_1}^{-1},h_{j_1}^{-1})
  (h_{j_1}^{-1},(g_{i_1}g_{i_2})^{-1})\cdots g_{i_p}h_{j_p}=\ldots
\end{multline*}
until we end up at a product of the form $c_1\cdots c_q h'$ where each $c_i$ is a commutator of the form $(g,h)$ or $(h,g)$ with $g\in G_m$, $h\in G_1\star\cdots\star G_{m-1}$, and $h'\in K_{m-1}$. Then we write $h'$ in a similar way using the inductive assumption. Now we use identities like $(ab,c)=(a,c)((a,c),b)(b,c)$ to show that $K_m$ is generated by iterated commutators $(g_1,\ldots,g_p)$ of length $p\ge2$ with arbitrary bracketing and $g_i\in G_{\alpha_i}$.

The next step is to express an iterated commutator $(g_1,\ldots,g_p)$ with arbitrary bracketing in terms of canonical nested commutators
$(g_1,\cdots(g_{p-2},(g_{p-1},g_p))\cdots)$; this is done is the standard way using the commutator identities as in~\cite{wald70} or~\cite{m-k-s76}. Then we use the commutator identity similar to~\eqref{swap}:
\[
  (g_q,(g_p,x))=(g_{q},x)(x,(g_{p},g_{q}))(g_{q},g_{p})(x,g_{p})(g_{p},(g_{q},x))
  (x,g_{q})(g_{p},g_{q})(g_{p},x)
\]
(here $g_p$ is an arbitrary element of $G_{\alpha_p}$, while in~\eqref{swap} it denoted a generator of $G_{\alpha_p}\cong\Z$). It allows us to swap elements in a canonical nested commutator until they reach the positions prescribed by the ordering of indices in~\eqref{itercomm}. This shows that the group $K_m$ is generated by precisely the commutators described in Theorem~\ref{ragws}~(b).

To show that the described commutators generate the group  $G(\sK):=\Ker(\mb G^\sK\to\prod_{k=1}^m G_k)$ for arbitrary $\sK$ we argue exactly as in the proof of Theorem~\ref{raagbasis}: eliminate commutators not appearing in the generator set using the new commutation relations in $\mb G^\sK$ and the commutator identity from the previous paragraph.

When establishing the minimality of the generator set, we replace $\lk=(\R,\Z)^\sK$ by the polyhedral product $(E\mb G,\mb G)^\sK$, which is the classifying space for the group
$G(\sK)$ by~\cite[Theorem~1.1]{staf15} or~\cite[Theorem~3.2]{pa-ve16}. First assume that each $G_k$ is a finite group. Then the number of commutators in Theorem~\ref{ragws}~(b) is finite and given by
\[
  P_m=\sum_{J\subset[m]}\Bigl((\mathop{\mathrm{cc}}(\sK_J)-1)\prod_{j\in J}(|G_j|-1)\Bigr),
\]
where $\mathop{\mathrm{cc}}(\sK_J)$ is the number of connected components of the complex~$\sK_J$, and $|G_j|$ denotes the cardinality of~$G_j$.
On the other hand, Theorem~\ref{wedgethmX} implies that
\begin{multline*}
  G(\sK)/G'(\sK)=H_1\bigl((E\mb G,\mb G)^\sK\bigr)\\
  \cong
  \widetilde H_0\Bigl(\bigvee_{J\subset[m]}|\sK_J| \wedge \mb G^{\wedge J}\Bigr)
  \cong \widetilde H_0\Bigl(\bigvee_{J\subset[m]} 
  |\sK_J|^{\vee \prod_{j\in J}(|G_j|-1)}\Bigr).
\end{multline*}
Hence, $G(\sK)/G'(\sK)$ is a free abelian group of rank~$P_m$.
Therefore, the number of generators of the group
$G(\sK)$ is at least $P_m$, as needed.

In the case when each $G_k$ is countable infinite we can use the ``exhausting'' procedure as in Theorem~\ref{raagbasis}. Note that the group-theoretic properties of $G_k$ do not affect this argument; only the cardinalities matter.

In the case of arbitrary (discrete) $G_k$, if the generator set is not minimal, then some iterated commutator $(g_1,\ldots,g_p)$ from the given generator set can be eliminated by expressing it through other commutators in the generator set. Denote by $H_k\subset G_k$ the subgroup generated by all elements of $G_k$ entering this expression. Then the same expression gives a relation in the graph product group $\mb H^{\sK}$, showing that the generator set is also not minimal for~$\mb H^{\sK}$. However, each group in the sequence $\mb H=(H_1,\ldots,H_m)$ is finitely generated and therefore countable, which is a contradiction.
\end{proof}

\end{document}